\DeclareFontFamily{U}{mathx}{\hyphenchar\font45}
\DeclareFontShape{U}{mathx}{m}{n}{
      <5> <6> <7> <8> <9> <10>
      <10.95> <12> <14.4> <17.28> <20.74> <24.88>
      mathx10
      }{}
\DeclareSymbolFont{mathx}{U}{mathx}{m}{n}
\DeclareMathAccent{\widecheck}      {0}{mathx}{"71}
\newcommand{\Z}{\mathbb{Z}} % integers
\newcommand{\R}{\mathbb{R}} % reals
\newcommand{\N}{\mathbb{N}} % natural numbers {1, 2, ...}
\newcommand{\F}{\mathbb{F}} % field, finite field
\newcommand{\bszero}{\boldsymbol{0}} % vector of zeros
\newcommand{\bsone}{\boldsymbol{1}}  % vector of ones
\newcommand{\bst}{\boldsymbol{t}}    % vector t
\newcommand{\bsx}{\boldsymbol{x}}    % vector x
\newcommand{\bsy}{\boldsymbol{y}}    % vector y
\newcommand{\bsz}{\boldsymbol{z}}    % vector z
\newcommand{\bsDelta}{\boldsymbol{\Delta}}    % vector \Delta
\DeclareSymbolFont{bbold}{U}{bbold}{m}{n}
\DeclareSymbolFontAlphabet{\mathbbold}{bbold}
\newcommand{\ind}{\mathbbold{1}}
  \providecommand*{\toclevel@author}{999}
  \providecommand*{\toclevel@title}{0}
\begin{document}
\spnewtheorem{algo}{Algorithm}{\bf}{\rm}
\newcommand{\bsa}{\boldsymbol{a}}    %%% vector a
\newcommand{\bsh}{\boldsymbol{h}}    %%% vector h
\newcommand{\bsi}{\boldsymbol{i}}    % vector i
\newcommand{\bsj}{\boldsymbol{j}}    %%% vector j
\newcommand{\bsk}{\boldsymbol{k}}    % vector k
\newcommand{\bsl}{\boldsymbol{l}}    % vector l
\newcommand{\bsr}{\boldsymbol{r}}    % vector r
\newcommand{\bsnu}{\boldsymbol{\nu}}    % vector nu
\newcommand{\me}{\text{e}}			%%% Do not
\newcommand{\cc}{\mathcal{C}}
\newcommand{\cm}{\mathcal{M}}		%%%
\newcommand{\cl}{\mathcal{L}}
\newcommand{\cn}{\mathcal{N}}
\newcommand{\Order}{\mathcal{O}}
\newcommand{\cp}{\mathcal{P}}
\newcommand{\cx}{\mathcal{X}}
\newcommand{\natm}{\N_{0,m}}
\newcommand{\cube}{[0,1)^d}
\newcommand{\hf}{\hat{f}}
\newcommand{\rf}{\mathring{f}}
\newcommand{\tf}{\tilde{f}}
\newcommand{\hg}{\hat{g}}
\newcommand{\hI}{\hat{I}}
\newcommand{\tvk}{\tilde{\bsk}}
\newcommand{\hS}{\widehat{S}}
\newcommand{\tS}{\widetilde{S}}
\newcommand{\wcS}{\widecheck{S}}
\newcommand{\okappa}{\overline{\kappa}}
\newcommand{\rnu}{\mathring{\nu}}
\newcommand{\tnu}{\widetilde{\nu}}
\newcommand{\hnu}{\widehat{\nu}}
\newcommand{\onu}{\overline{\nu}}
\newcommand{\hbsnu}{\widehat{\bsnu}}   %%%
\newcommand{\homega}{\widehat{\omega}}
\newcommand{\wcomega}{\mathring{\omega}}
\newcommand{\fC}{\mathfrak{C}}
\newcommand{\nodes}{\{\bsz_i\}_{i=0}^{\infty}}
\newcommand{\nodesn}{\{\bsz_i\}_{i=0}^{n-1}}
\newcommand{\norm}[1]{\ensuremath{\left \lVert #1 \right \rVert}}
\newcommand{\abs}[1]{\ensuremath{\left |  #1 \right |}} %%%
\newcommand{\bigabs}[1]{\ensuremath{\bigl \lvert #1 \bigr \rvert}}
\newcommand{\Bigabs}[1]{\ensuremath{\Bigl \lvert #1 \Bigr \rvert}}
\newcommand{\biggabs}[1]{\ensuremath{\biggl \lvert #1 \biggr \rvert}}
\newcommand{\Biggabs}[1]{\ensuremath{\Biggl \lvert #1 \Biggr \rvert}}
\newcommand{\ip}[3][{}]{\ensuremath{\left \langle #2, #3 \right \rangle_{#1}}}

\newcommand{\lattice}{\cl} %you can make your own here
\newcommand{\Lebesgue}{L} %you can make your own here
\newcommand{\cublat}{\texttt{cubLattice\_g}\xspace}
\allowdisplaybreaks

\title*{Adaptive Multidimensional Integration Based on Rank-1 Lattices}
\author{Llu\'is Antoni Jim\'enez Rugama \and Fred J. Hickernell}
\institute{Llu\'is Antoni \and Fred J. Hickernell \at Department of Applied Mathematics,  Illinois Institute of Technology, 10 W. 32$^{\text{nd}}$ Street, E1-208, Chicago, IL 60616, USA
\email{ljimene1@hawk.iit.edu}, \email{hickernell@iit.edu}}
\maketitle

\abstract{Quasi-Monte Carlo methods are used for numerically integrating multivariate functions. However, the error bounds for these methods typically rely on a priori knowledge of some semi-norm of the integrand, not on the sampled function values. In this article, we propose an error bound based on the discrete Fourier coefficients of the integrand. If these Fourier coefficients decay more quickly, the integrand has less fine scale structure, and the accuracy is higher. We focus on rank-1 lattices because they are a commonly used quasi-Monte Carlo design and because their algebraic structure facilitates an error analysis based on a Fourier decomposition of the integrand. This leads to a guaranteed adaptive cubature algorithm with computational cost $\Order(mb^m)$, where $b$ is some fixed prime number and $b^m$ is the number of data points.}

\section{Introduction}

Quasi-Monte Carlo (QMC) methods use equally weighted sums of integrand values at carefully chosen nodes to approximate multidimensional integrals over the unit cube,
\[
\frac 1n \sum_{i=0}^{n-1} f(\bsz_i)\approx \int_{\cube} f(\bsx) \, \D \bsx.
\]
Integrals over more general domains may often be accommodated by a transformation of the integration variable. QMC methods are widely used because they do not suffer from a \textit{curse of dimensionality}. The existence of QMC methods with dimension-independent error convergence rates is discussed in \cite[Ch.\ 10--12]{NovWoz10a}. See \cite{DicEtal14a} for a recent review.

The QMC convergence rate of $\Order(n^{-(1-\delta)})$ does not give enough information about the absolute error to determine how large $n$ must be to satisfy a given error tolerance, $\varepsilon$. The objective of this research is to develop a guaranteed, QMC algorithm based on rank-1 lattices that determines $n$ adaptively by calculating a data-driven upper bound on the absolute error. The Koksma-Hlawka inequality is impractical for this purpose because it requires the total variation of the integrand. Our data-driven bound is expressed in terms of the integrand's discrete Fourier coefficients. 

Sections \ref{secrank1lat}--\ref{FFT} describe the group structure of rank-1 lattices and how the complex exponential functions are an appropriate basis for these nodes. For computation purposes, there is also an explanation of how to obtain the discrete Fourier transform of $f$ with an $\Order(n \log(n))$ computational cost.  New contributions are described in Section \ref{secerrest} and \ref{algorithmsection}. Initially, a mapping from $\N_0$ to the space of wavenumbers, $\Z^d$, is defined according to constraints given by the structure of our rank-1 lattice node sets. With this mapping, we define a set of integrands for which our new adaptive algorithm is designed.  This set is defined in terms of cone conditions satisfied by the (true) Fourier coefficients of the integrands. These conditions make it possible to derive an upper bound on the rank-1 lattice rule error in terms of the discrete Fourier coefficients, which can be used to construct an adaptive algorithm.  An upper bound on the computational cost of this algorithm is derived. Finally, there is an example of option pricing using the MATLAB implementation of our algorithm, \cublat, which is part of the Guaranteed Automatic Integration Library \cite{ChoEtal15a}. A parallel development for Sobol' cubature is given in \cite{HicJim16a}.

\section{Rank-1 Integration Lattices}\label{secrank1lat}

Let $b$ be prime number, and let $\F_{n}:=\{0, \ldots, n-1\}$ denote the set of the first $n$ non-negative integers for any $n \in \N$. The aim is to construct a sequence of embedded node sets with $b^m$ points for $m \in \N_0$:
\[
\{\bszero\}=:\cp_0\subset \cp_1 \dots\subset\cp_m:=\{\bsz_i\}_{i\in \F_{b^m}} \subset\dots\subset\cp_\infty:=\{\bsz_i\}_{i\in \N_0}.
\]
Specifically, the sequence $\bsz_1, \bsz_b, \bsz_{b^2},  \ldots \in \cube$ is chosen such that 
\begin{subequations} \label{zbmdef}
\begin{gather} 
\bsz_1 = b^{-1} \bsa_0, \qquad \bsa_0 \in \{1, \ldots, b-1\}^{d}, \\
\bsz_{b^m} = b^{-1}(\bsz_{b^{m-1}}+\bsa_m)= b^{-1}\bsa_m + \cdots + b^{-m-1} \bsa_0, \qquad \bsa_m \in \F_b^{d}, \ \ m \in \N.
\end{gather}
\end{subequations}
From this definition it follows that for all $m \in \N_0$,
\begin{equation} 
b^\ell\bsz_{b^m} \bmod {1} = \begin{cases} \bsz_{b^{m-\ell}}, & \ell=0, \ldots, m \\
\bszero, & \ell=m+1, m+2, \ldots .
\end{cases}
\label{latpropb}
\end{equation}
Next, for any $i \in \N$ with proper $b$-ary expansion $i=i_0+i_1 b + i_2 b^2 + \cdots$, and $m=\lfloor \log_b(i) \rfloor+1$ define 
\begin{multline} \label{zidef}
\bsz_i : = \sum_{\ell=0}^{\infty} i_\ell \bsz_{b^\ell} \bmod 1 = \sum_{\ell=0}^{m-1} i_\ell \bsz_{b^\ell} \bmod 1 = \sum_{\ell=0}^{m-1} i_\ell b^{m-1-\ell}  \bsz_{b^{m-1}} \bmod 1 \\ 
 = j \bsz_{b^{m-1}} \bmod 1, \qquad \text{where } j= \sum_{\ell=0}^{m-1} i_\ell b^{m-1-\ell},
\end{multline}
where \eqref{latpropb} was used.  This means that node set $\cp_m$ defined above may be written as the integer multiples of the generating vector $\bsz_{b^{m-1}}$ since 
\begin{multline*}
\cp_m:= \{\bsz_i\}_{i \in \F_{b^m}} = \bigg \{ \bsz_{b^{m-1}} \sum_{\ell=0}^{m-1} i_\ell b^{m-1-\ell} \bmod 1 : i_0, \ldots , i_{m-1} \in \F_b \bigg \} \\
= \left\{ j \bsz_{b^{m-1}} \bmod 1\right \}_{j \in \F_{b^m} }.
\end{multline*}

Integration lattices, $\lattice$, are defined as discrete groups in $\R^d$ containing $\Z^d$ and closed under normal addition \cite[Sec. 2.7-2.8]{SloJoe94}.  The node set of an integration lattice is its intersection with the half-open unit cube, $\cp:=\lattice \cap \cube$. In this case, $\cp$ is also a group, but this time under addition modulo 1, i.e., operator $\oplus:\cube \times \cube \to \cube$ defined by $\bsx\oplus\bsy:=(\bsx+\bsy)\bmod 1$, and where $\ominus \bsx:=\bsone-\bsx$.

Sets $\cp_m$ defined above are embedded node sets of integration lattices. The sufficiency of a single generating vector for each of these $\cp_m$ is the reason that $\cp_m$ is called the node set of a \emph{rank-1} lattice. 
The theoretical properties of good embedded rank-1 lattices for cubature are discussed in \cite{HicNie03a}.  
 
The set of $d$-dimensional integer vectors, $\Z^d$, is used to index Fourier series expressions for the integrands, and $\Z^d$ is also known as the wavenumber space. We define the bilinear operation $\ip{\cdot}{\cdot}: \Z^d \times \cube \to [0,1)$ as the dot product modulo $1$: 
\begin{equation}\label{bilinear}
\ip{\bsk}{\bsx}:=\bsk^T\bsx\bmod 1 \qquad \forall \bsk \in \Z^d, \ \bsx \in \cube.
\end{equation}
This bilinear operation has the following properties: for all $\bst, \bsx \in \cube$, $\bsk, \bsl \in \Z^d$, and $a \in \Z$, it follows that
\begin{subequations}
\begin{gather}
\ip{\bsk}{\bszero} = \ip{\bszero}{\bsx} = 0,\\
\ip{\bsk}{a \bsx \bmod 1 \oplus \bst} = (a\ip{\bsk}{\bsx} + \ip{\bsk}{\bst}) \bmod 1 \label{bilinearlinxprop} \\
\ip{a \bsk + \bsl}{\bsx} = (a\ip{\bsk}{\bsx} + \ip{\bsl}{\bsx}) \bmod 1, \label{bilinearlinkprop}\\
\ip{\bsk}{\bsx} = 0 \ \forall \bsk \in \Z^d \ \implies \ \bsx=\bszero.\label{bilinearlinzeroprop}
\end{gather}
\end{subequations}
An additional constraint placed on the embedded lattices is that
\begin{equation}
\ip{\bsk}{\bsz_{b^m}} =  0 \ \forall m \in \N_0   \ \implies \ \bsk=\bszero. \label{latpropd}
\end{equation}

The bilinear operation defined in \eqref{bilinear} is also used to define the \emph{dual} lattice corresponding to $\cp_m$:
\begin{align}
\nonumber
\cp^{\perp}_m & := \{\bsk \in \Z^d : \ip{\bsk}{\bsz_i} = 0, \ i\in \F_{b^m}\} \\
&= \{\bsk \in \Z^d : \ip{\bsk}{\bsz_{b^{m-1}}} = 0 \} \qquad \text{by \eqref{zidef} and \eqref{bilinearlinxprop}} .\label{dualdef}
\end{align}
By this definition $\cp^{\perp}_{0}=\Z^d$, and the properties \eqref{latpropb}, \eqref{bilinear}, and \eqref{latpropd}, imply also that the $\cp^{\perp}_m$ are nested subgroups with
\begin{equation}\label{dualemb}
\Z^d=\cp^{\perp}_{0}\supseteq\dots\supseteq\cp^{\perp}_{m}\supseteq\dots\supseteq\cp^{\perp}_{\infty}=\{\bszero\}.
\end{equation}
Analogous to the \emph{dual} lattice definition, for $j\in\F_{b^m}$ one can define the \emph{dual} cosets as $\cp^{\perp,j}_m := \{\bsk \in \Z^d : b^m\ip{\bsk}{\bsz_{b^{m-1}}} = j\}$. Hence, a similar extended property \eqref{dualemb} applies:
\begin{equation}\label{dualcosetemb}
\cp^{\perp,j}_{m}=\bigcup_{a=0}^{b-1}\cp^{\perp,j+ab^m}_{m+1}
\Longrightarrow
\cp^{\perp,j}_{m}\supseteq\cp^{\perp,j+ab^m}_{m+1}, \quad a\in\F_{b},\, j\in\F_{b^m}.
\end{equation}
The overall \emph{dual} cosets structure can be represented as a tree, where $\{\cp^{\perp,j+ab^m}_{m+1}\}_{a=0}^{b-1}$ are the children of $\cp^{\perp,j}_{m}$.

Figure \ref{Latticefig} shows an example of a rank-1 lattice node set with $64$ points in dimension $2$ and its dual lattice. The parameters defining this node set are $b=2$, $m=6$, and $\bsz_{32}=(1,27)/64$. It is useful to see how $\cp_{m}=\cp_{m-1}\cup\left\{\cp_{m-1}+\bsz_{2^{m-1}}\bmod 1\right\}$.
\begin{figure}[h!]
\centering
\begin{tabular}{>{\centering}p{5cm}>{\centering}p{5cm}}
\includegraphics[width=4.5cm]{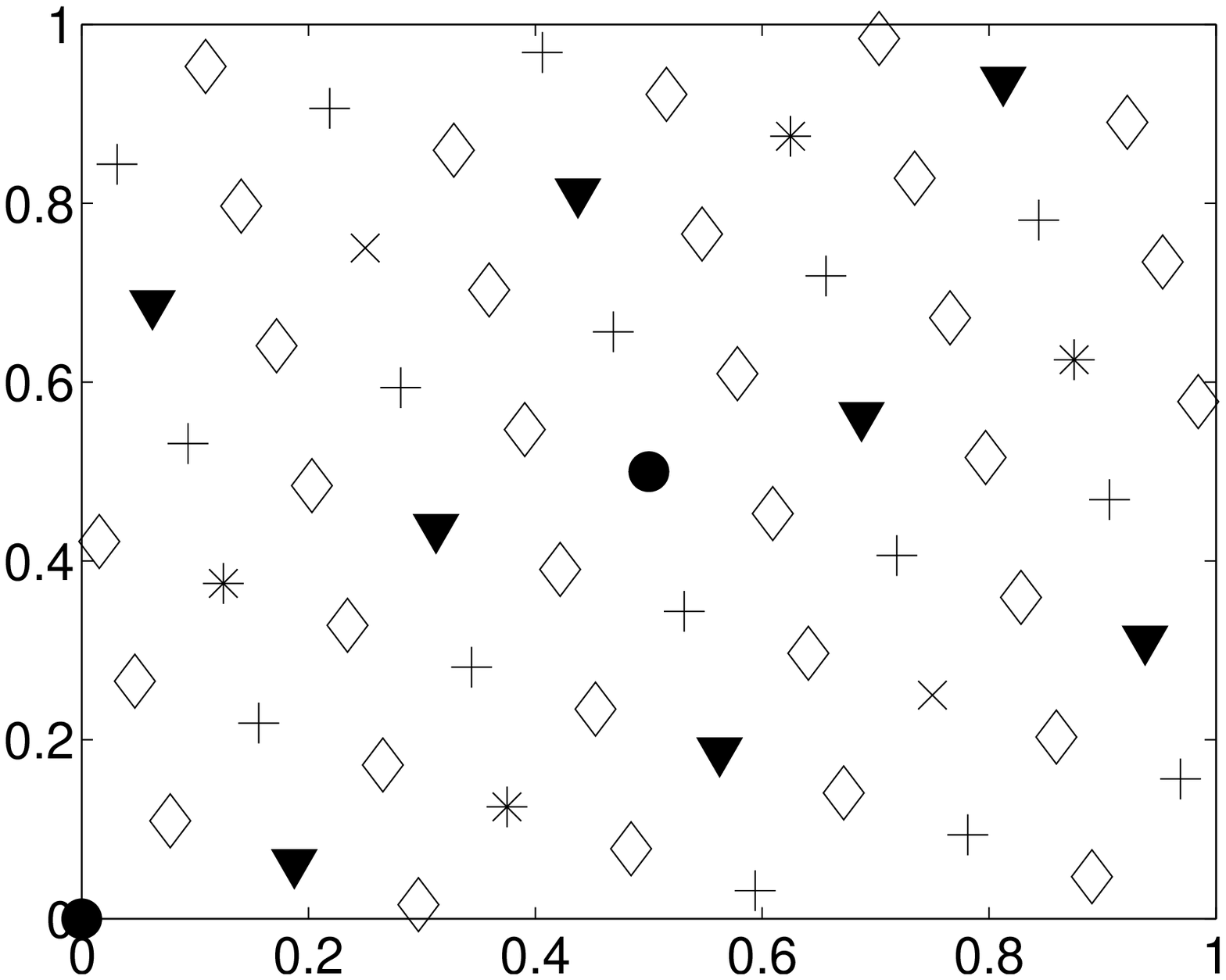} &
\includegraphics[width=4.5cm]{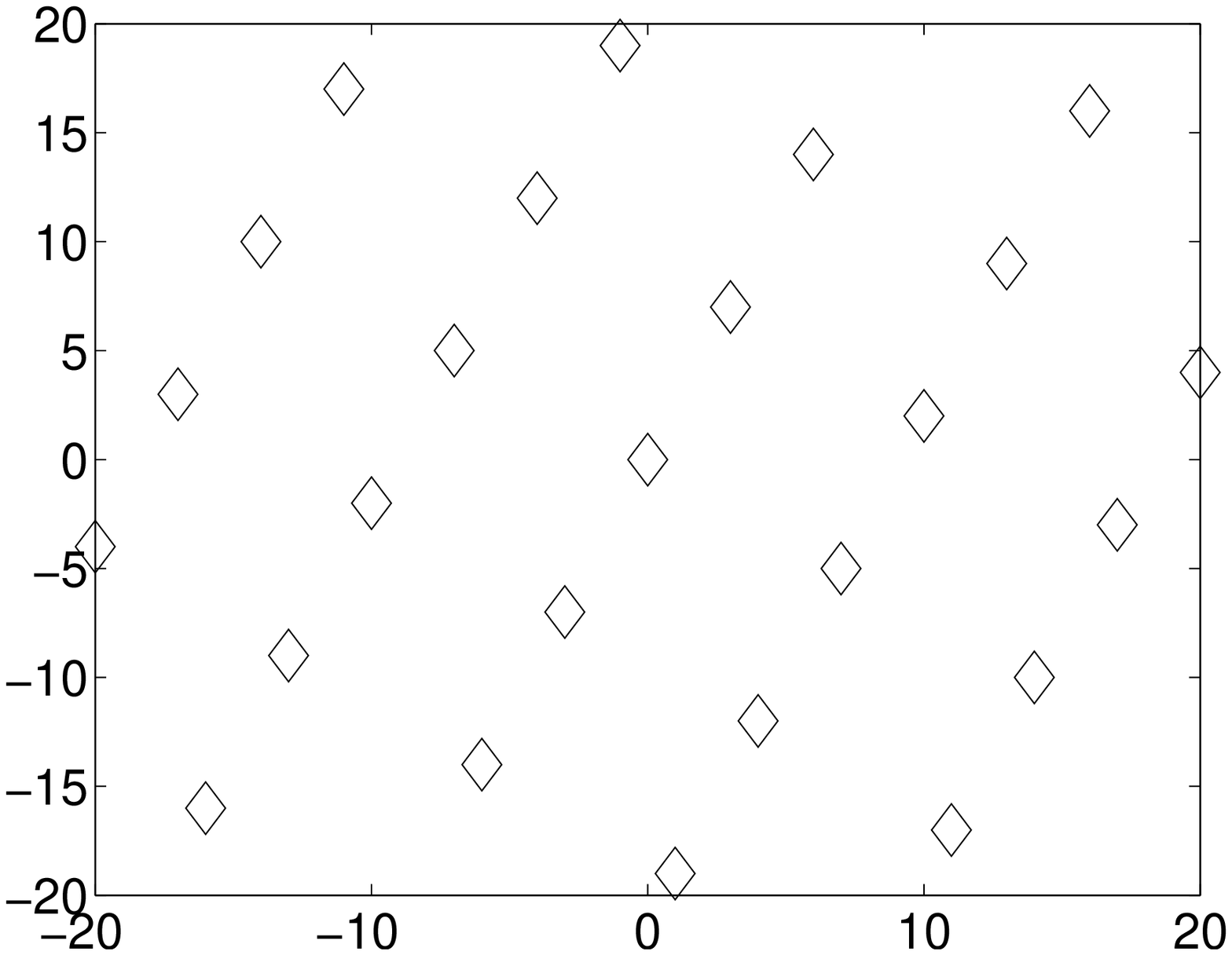}\tabularnewline
a) & b)
\end{tabular}
\caption{Plots of a) the node set $\cp_6$ depicted as $\bullet\{\bsz_{0},\bsz_{1}\}$, $\times\{\bsz_{2},\bsz_{3}\}$, $\ast\{\bsz_{4},\dots,\bsz_{7}\}$, $\blacktriangledown\{\bsz_{8},\dots,\bsz_{15}\}$, $+\{\bsz_{16},\dots,\bsz_{31}\}$, $\diamond\{\bsz_{32},\dots,\bsz_{63}\}$, and b) some of the dual lattice points, $\cp_6^{\perp} \cap [-20,20]^2$.}\label{Latticefig}
\end{figure}

\section{Fourier Series}\label{secfourierseries}

The integrands considered here are absolutely continuous periodic functions. If the integrand is not initially periodic, it may be periodized as discussed in \cite{Hic01a}, \cite{Sid93}, or \cite[Sec. 2.12]{SloJoe94}. More general box domains may be considered, also by using variable transformations, see e.g.,  \cite{HicSloWas03a,HicSloWas03e}.

The $\Lebesgue^2(\cube)$ inner product is defined as
$
\ip[2]{f}{g} = \int_{\cube} f(\bsx) \overline{g(\bsx)} \, \D \bsx.
$
The complex exponential functions, $\{\E^{2 \pi \sqrt{-1} \ip{\bsk}{\cdot}}\}_{\bsk \in \Z^d}$ form a complete orthonormal \emph{basis} for $\Lebesgue^2(\cube)$. So, any function in $\Lebesgue^2(\cube)$ may be written as its Fourier series as
\begin{equation} \label{Fourierdef}
f(\bsx) = \sum_{\bsk \in \Z^d} \hf(\bsk) \E^{2 \pi \sqrt{-1} \ip{\bsk}{\bsx}}, \quad \text{where } \hf(\bsk) = \ip[2]{f}{\E^{2 \pi \sqrt{-1} \ip{\bsk}{\cdot}}},
\end{equation}
and the inner product of two functions in $\Lebesgue^2(\cube)$ is the $\ell^2$ inner product of their series coefficients:
\[
\ip[2]{f}{g} = \sum_{\bsk \in \Z^d} \hf(\bsk)\overline{\hg(\bsk)} =: \ip[2]{\bigl(\hf(\bsk)\bigr)_{\bsk \in \Z^d}}{\bigl ( \hg(\bsk)\bigr )_{\bsk \in \Z^d}}.
\]

Note that for any $\bsz\in\cp_m$ and $\bsk\in\cp_m^\perp$, we have $\E^{2 \pi \sqrt{-1} \ip{\bsk}{\bsz}}=1$. The special group structure of the  lattice node set, $\cp_m$, leads to a useful formula for the average of any Fourier basis function over $\cp_m$. According to \cite[Lemma 5.21]{Nie92},
\begin{equation}\label{avrFourier}
\frac{1}{b^m} \sum_{i=0}^{b^m-1} \E^{2 \pi \sqrt{-1} \ip{\bsk}{\bsz_i}} = \ind_{\cp_m^{\perp}}(\bsk) = \begin{cases} 1 , & \bsk \in \cp_m^{\perp}\\
 0,  & \bsk \in \Z^d \setminus \cp_m^{\perp}.
 \end{cases}
\end{equation}

This property of the dual lattice is used below to describe the absolute error of a shifted rank-1 lattice cubature rule in terms of the Fourier coefficients for wavenumbers in the dual lattice. For fixed $\bsDelta \in \cube$, the cubature rule is defined as
\begin{equation} \label{cubaturedef}
\hI_m(f) := \frac{1}{b^m} \sum_{i=0}^{b^m-1} f(\bsz_i \oplus\bsDelta),  \qquad  m \in \N_0.
\end{equation}
Note from this definition that $\hI_m\left(\E^{2 \pi \sqrt{-1} \ip{\bsk}{\cdot}}\right)= \E^{2 \pi \sqrt{-1} \ip{\bsk}{\bsDelta}}\ind_{\cp_m^{\perp}}(\bsk)$. The series decomposition defined in \eqref{Fourierdef} and equation \eqref{avrFourier} are used in intermediate results from \cite[Theorem 5.23]{Nie92} to show that,
\begin{align}
\biggabs{ \int_{\cube} f(\bsx) \, \D \bsx - \hI_m(f)} 
= \Biggabs {\sum_{\bsk \in \cp_m^{\perp}\setminus \{\bszero\} } \hf(\bsk)  \E^{2 \pi \sqrt{-1} \ip{\bsk}{\bsDelta}} } \le \sum_{\bsk \in \cp_m^{\perp}\setminus \{\bszero\} } \abs{\hf(\bsk)}. \label{err1}
\end{align}

\section{The Fast Fourier Transform for Function Values at Rank-1 Lattice Node Sets}\label{FFT}

Adaptive Algorithm \ref{adapalgo} (\cublat) constructed in Section \ref{algorithmsection} has an error analysis based on the above expression.  However, the true Fourier coefficients are unknown and they must be approximated by the discrete coefficients, defined as:
\begin{subequations} \label{tfdefalias}
\begin{align}
\tf_m(\bsk)
&:= \hI_m\left( \E^{-2 \pi \sqrt{-1} \ip{\bsk}{\cdot}} f(\cdot) \right) \label{tfdef}\\
\nonumber
& = \hI_m\left( \E^{-2 \pi \sqrt{-1} \ip{\bsk}{\cdot}} \sum_{\bsl \in \Z^d} \hf(\bsl) \E^{2 \pi \sqrt{-1} \ip{\bsl}{\cdot}}  \right) \\
\nonumber
& = \sum_{\bsl \in \Z^d} \hf(\bsl) \hI_m\left( \E^{2 \pi \sqrt{-1} \ip{\bsl-\bsk}{\cdot}}  \right) \\
\nonumber
& = \sum_{\bsl \in \Z^d} \hf(\bsl) \E^{2 \pi \sqrt{-1} \ip{\bsl - \bsk}{\bsDelta}} \ind_{\cp_m^{\perp}}(\bsl - \bsk) \\
\nonumber
& = \sum_{\bsl \in \cp^{\perp}_m} \hf(\bsk+\bsl) \E^{2 \pi \sqrt{-1} \ip{\bsl }{\bsDelta}} \\
&= \hf(\bsk) + \sum_{\bsl \in \cp^{\perp}_m\setminus \{\bszero\}} \hf(\bsk+\bsl) \E^{2 \pi \sqrt{-1} \ip{\bsl }{\bsDelta}}, \qquad \forall \bsk \in \Z^d. \label{tfassum}
\end{align}
\end{subequations}
Thus, the discrete transform $\tf_m(\bsk)$ equals the integral transform $\hf(\bsk)$, defined in \eqref{Fourierdef}, plus \emph{aliasing} terms corresponding to $\hf(\bsk+\bsl)$ scaled by the shift, $\bsDelta$, where $\bsl \in \cp_{m}^{\perp}\setminus \left\{\bszero\right\}$.

To facilitate the calculation of $\tf_m(\bsk)$, we define the map $\tnu_m : \Z^d \to \F_{b^m}$ as follows:
\begin{equation} \label{kdotzbm}
\tnu_{0}(\bsk) := 0, \qquad
\tnu_{m}(\bsk) := b^{m} \ip{\bsk}{\bsz_{b^{m-1}}}, \quad m \in \N.
\end{equation}
A simple but useful remark is that $\cp^{\perp,j}_m$ corresponds to all $\bsk \in \Z^d$ such that $\tnu_{m}(\bsk)=j$ for $ j\in\F_{b^m}$. The above definition implies that $\ip{\bsk}{\bsz_i}$ appearing in $\tf_m(\bsk)$, may be written as
\begin{multline} \label{kdotzi}
\ip{\bsk}{\bsz_i} = \ip{\bsk}{\sum_{\ell=0}^{m-1} i_\ell \bsz_{b^{\ell}} \bmod 1} = \sum_{\ell=0}^{m-1} i_\ell \ip{\bsk}{\bsz_{b^{\ell}}} \bmod 1 \\
=\sum_{\ell=0}^{m-1} i_\ell \tnu_{\ell+1}(\bsk)  b^{-\ell-1} \bmod 1.
\end{multline}

The map $\tnu_m$ depends on the choice of the embedded rank-1 lattice node sets defined in \eqref{zbmdef} and \eqref{zidef}.  We can confirm that the right hand side of this definition lies in $\F_{b^m}$ by appealing to  \eqref{zbmdef} and recalling that the $\bsa_\ell$ are integer vectors:
\begin{align*}
b^{m} \ip{\bsk}{\bsz_{b^{m-1}}} &= b^m[(b^{-1} \bsk^T\bsa_{m-1} + \cdots + b^{-m}\bsk^T \bsa_{0}) \bmod 1]\\
&= (b^{m-1} \bsk^T\bsa_{m-1} + \cdots + \bsk^T \bsa_{0}) \bmod b^m \in \F_{b^m}, \ m \in \N.
\end{align*}

Moreover, note that for all $m\in \N$
\begin{align}
\label{tnuprop}
\nonumber
\tnu_{m+1}(\bsk) - \tnu_{m}(\bsk) & = b^{m+1} \ip{\bsk}{\bsz_{b^{m}}} - b^{m} \ip{\bsk}{\bsz_{b^{m-1}}} \\
\nonumber
&=b^{m}[ b\ip{\bsk}{\bsz_{b^{m}}} - \ip{\bsk}{\bsz_{b^{m-1}}}]\\
\nonumber
&=b^{m}[ a + \ip{\bsk}{b\bsz_{b^{m}} \bmod 1}  - \ip{\bsk}{\bsz_{b^{m-1}}}], \quad \text{for some }a \in \F_b \\
\nonumber
&=b^{m}[ a + \ip{\bsk}{\bsz_{b^{m-1}}}  - \ip{\bsk}{\bsz_{b^{m-1}}}], \quad  \text{by \eqref{latpropb}} \\
&=a b^{m} \quad \text{for some }a \in \F_b.
\end{align}
For all $\nu\in \N_0$ with proper $b$-ary expansion $\nu= \nu_0 + \nu_1 b + \cdots \in \N_0$, let $\onu_{m}$ denote the integer obtained by keeping only the first $m$ terms of its $b$-ary expansion, i.e., 
\begin{equation}
\label{onudef}
\onu_{m} := \nu_0 + \cdots + \nu_{m-1}b^{m-1} = [(b^{-m} \nu) \bmod 1] b^{m} \in \F_{b^{m}}
\end{equation}
The derivation in \eqref{tnuprop} means that if $\tnu_{m}(\bsk) = \nu \in \F_{b^m}$, then
\begin{equation}\label{nupropexp}
\tnu_\ell(\bsk)=\onu_{\ell}, \qquad \ell=1, \ldots, m.
\end{equation} 

Letting $y_i:=f(\bsz_{i}\oplus \bsDelta)$ for $i\in \N_0$ and considering \eqref{kdotzi}, the discrete Fourier transform defined in \eqref{tfdef} can now be written as follows:
\begin{align}
\nonumber
\tf_m(\bsk)
&:= \hI_m\left( \E^{-2 \pi \sqrt{-1} \ip{\bsk}{\cdot}} f(\cdot) \right) = \frac{1}{b^m}\sum_{i=0}^{b^m-1} \E^{-2 \pi \sqrt{-1} \ip{\bsk}{\bsz_{i}\oplus \bsDelta}} y_i \\
&= \E^{-2 \pi \sqrt{-1} \ip{\bsk}{\bsDelta}} Y_{m}(\tnu_m(\bsk)), \qquad m \in \N_0, \ \bsk \in \Z^d, \label{tfmYdef}
\intertext{where for all $m, \nu \in \N_0$,}
\nonumber
Y_m (\nu) & := \frac{1}{b^m} \sum_{i_{m-1}=0}^{b-1} \cdots \sum_{i_{0}=0}^{b-1} y_{i_0 + \cdots +i_{m-1} b^{m-1}} \exp\left(-2 \pi \sqrt{-1}\sum_{\ell=0}^{m-1} i_\ell \onu_{\ell+1}  b^{-\ell-1} \right)\\
\nonumber
&=Y_m(\onu_m).
\end{align}
The quantity $Y_m(\nu)$, $\nu \in \F_{b^m}$, which is essentially the discrete Fourier transform, can be computed efficiently via some intermediate quantities. For $p \in \{0, \ldots, m-1\}$, $m,\nu \in \N_0$ define $Y_{m,0}(i_{0}, \ldots, i_{m-1}) := y_{i_0 + \cdots +i_{m-1} b^{m-1}}$ and let
\begin{align*}
\MoveEqLeft{Y_{m,m-p}(\nu,i_{m-p}, \ldots, i_{m-1})} \\
& :=\frac{1}{b^{m-p}} \sum_{i_{m-p-1}=0}^{b-1} \cdots  \sum_{i_{0}=0}^{b-1} y_{i_0 + \cdots + i_{m-1} b^{m-1}} \exp\left( -2 \pi \sqrt{-1}\sum_{\ell=0}^{m-p-1} i_\ell \onu_{\ell+1}  b^{-\ell-1}  \right).
\end{align*}
Note that $Y_{m,m-p}(\nu,i_{m-p}, \ldots, i_{m-1})=Y_{m,m-p}(\onu_{m-p},i_{m-p}, \ldots, i_{m-1})$, and thus takes on only $b^m$ distinct values.  Also note that $Y_{m,m}(\nu)= Y_m(\nu)$. For $p=m-1, \ldots, 0$, compute
\begin{align*}
\MoveEqLeft{Y_{m,m-p}(\nu,i_{m-p}, \ldots, i_{m-1})} \\
& =\frac{1}{b^{m-p}} \sum_{i_{m-p-1}=0}^{b-1} \cdots  \sum_{i_{0}=0}^{b-1} y_{i_0 + \cdots + i_{m-1} b^{m-1}} \exp\left( -2 \pi \sqrt{-1} \sum_{\ell=0}^{m-p-1} i_\ell \onu_{\ell+1}  b^{-\ell-1}   \right)\\
& =\frac{1}{b} \sum_{i_{m-p-1}=0}^{b-1} Y_{m,m-p-1}(\nu,i_{m-p-1}, \ldots, i_{m-1})\exp\left( -2 \pi \sqrt{-1}  i_{m-p-1} \onu_{m-p} b^{-m+p}  \right).
\end{align*}
For each $p$ one must perform $\Order(b^m)$ operations, so the total computational cost to obtain  $Y_m(\nu)$ for all $\nu \in \F_{b^m}$ is $\Order(mb^m)$.

\section{Error Estimation}\label{secerrest}

As seen in equation \eqref{err1}, the absolute error is bounded by a sum of the absolute value of the Fourier coefficients in the dual lattice. Note that increasing the number of points in our lattice, i.e. increasing $m$, removes wavenumbers from the set over which this summation is defined. However, it is not obvious how fast is this error decreasing with respect to $m$. Rather than deal with a sum over the vector wavenumbers, it is more convenient to sum over scalar non-negative integers.  Thus, we define another mapping $\tvk: \N_0 \to \Z^d$.

\begin{definition} \label{wavenummapdef} Given a sequence of points in embedded lattices, $\cp_{\infty} = \{\bsz_i\}_{i=0}^{\infty}$ define $\tvk: \N_0 \to \Z^d$ \emph{one-to-one} and \emph{onto} recursively as follows:
\begin{tabbing}
\hspace{0.5cm} \= Set $\tvk(0)=\bszero$ \+ \\
For $m\in \N_0$ \\
\hspace{0.3cm} \= For $\kappa \in \F_{b^m}$ ,  \+ \\
\hspace{0.3cm} \= Let $a\in \F_b$ be such that $\tnu_{m+1}(\tvk(\kappa))= \tnu_{m}(\tvk(\kappa)) + ab^m$. \+ \\
i) \hspace{1ex} \= If $a\ne 0$, choose $\tvk(\kappa+a b^m) \in \{\bsk \in  \Z^d : \tnu_{m+1}(\bsk)=\tnu_{m}(\tvk(\kappa))\}$. \\
ii) \> Choose $\tvk(\kappa+a' b^m) \in \{\bsk \in  \Z^d : \tnu_{m+1}(\bsk)=\tnu_{m}(\tvk(\kappa))+a' b^m \}$, \\ \` for  $a'\in \{1, \ldots, b-1\}\setminus \{a\}$.
\end{tabbing}
\end{definition}

Definition \ref{wavenummapdef} is intended to reflect the embedding of the \emph{dual} cosets described in \eqref{dualemb} and \eqref{dualcosetemb}. For clarity, consider $\tnu_{m}(\tvk(\kappa))=j$. In i), if $\tvk(\kappa)\in\cp^{\perp,j+ab^m}_{m+1}$ with $a>0$, we choose $\tvk(\kappa+ab^m)\in\cp^{\perp,j}_{m+1}$. Otherwise by ii), we simply choose $\tvk(\kappa+a' b^m)\in\cp^{\perp,j+a' b^m}_{m+1}$. Condition i) forces us to pick wavenumbers in $\cp^{\perp,j}_{m+1}$.

This mapping is not uniquely defined and one has the flexibility to choose part of it. For example, defining a norm such as in \cite[Chap. 4]{SloJoe94} one can assign smaller values of $\kappa$ to smaller wavenumbers $\bsk$. In the end, our goal is to define this mapping such that $\hf(\tvk(\kappa))\rightarrow 0$ as $\kappa \to \infty$. In addition, it is one-to-one since at each step the new values $\tvk(\kappa+a b^m)$ or $\tvk(\kappa+a' b^m)$ are chosen from sets of wavenumbers that exclude those wavenumbers already assigned to $\tvk(\kappa)$.  The mapping can be made onto by choosing the ``smallest'' wavenumber in some sense. 

It remains to be shown that for any $\kappa \in \F_{b^{m}}$, $\{\bsk \in  \Z^d : \tnu_{m+1}(\bsk)=\tnu_{m}(\tvk(\kappa))+a' b^m\}$ is nonempty for all $a' \in \F_b$ with $a' \ne a$.  Choose $\bsl$ such that $\ip{\bsl}{\bsz_1}=b^{-1}$. This is possible because $\bsz_1 = b^{-1} \bsa_0 \ne \bszero$.  For any $m \in \N_0$, $\kappa \in \F_{b^m}$, and  $a'' \in \F_b$, note that
\begin{align*}
\ip{\tvk(\kappa) + a'' b^{m} \bsl}{\bsz_{b^{m}}}& = \ip{\tvk(\kappa)}{\bsz_{b^{m}}} + a'' b^m \ip{\bsl}{\bsz_{b^{m}}} \bmod 1 \qquad \qquad \text{by \eqref{bilinearlinkprop}} \\
&= [b^{-m-1}\tnu_{m+1}(\tvk(\kappa)) + a'' \ip{\bsl}{b^m  \bsz_{b^{m}}\bmod 1}] \bmod 1 \\
& \qquad \qquad \qquad  \text{by \eqref{bilinearlinxprop} and \eqref{kdotzbm}}\\
&= [b^{-m-1}\tnu_{m}(\tvk(\kappa)) + a b^{-1} + a'' \ip{\bsl}{\bsz_{1}}] \bmod 1 \qquad  \text{by \eqref{latpropb}}\\
&= [b^{-m-1}\tnu_{m}(\tvk(\kappa)) + (a+a'') b^{-1}] \bmod 1,
\end{align*}
Then it follows that
\begin{equation*}
\tnu_{m+1}(\tvk(\kappa) + a'' b^{m} \bsl) = 
\tnu_{m}(\tvk(\kappa)) + (a+a'' \bmod b) b^{m} \qquad \text{by \eqref{kdotzbm}}.
\end{equation*}
By choosing $a''$ such that $a'=(a+a'' \bmod b)$, we have shown that the set $\kappa \in \F_{b^{m}}$, $\{\bsk \in  \Z^d : \tnu_{m+1}(\bsk)=\tnu_{m}(\tvk(\kappa))+a' b^m\}$ is nonempty.

To illustrate the initial steps of a possible mapping, consider the lattice in Figure \ref{Latticefig} and Table \ref{tablemap}. For $m=0$, $\kappa\in\{0\}$ and $a=0$. This skips i) and implies $\tvk(1)\in\{\bsk \in  \Z^d :\tnu_{1}(\bsk)=2
\ip{\bsk}{(1,27)/2}=1\}$, so one may choose $\tvk(1):=(-1,0)$. After that, $m=1$ and $\kappa\in\{0,1\}$. Starting with $\kappa=0$, again $a=0$ and we jump to ii) where we require $\tvk(2)\in\{\bsk \in  \Z^d : \tnu_{2}(\bsk)=4\ip{\bsk}{(1,27)/4}=2\}$ and thus, we can take $\tvk(2):=(-1,1)$. When $\kappa=1$, we note that $\tnu_{2}(\tvk(1))=\tnu((-1,0))=3$. Here $a=1$ leading to i) and $\tvk(3)\in\{\bsk \in  \Z^d : \tnu_{2}(\bsk)=1\}$, so we may choose $\tvk(3):=(1,0)$. Continuing, we may take $\tvk(4):=(-1,-1)$, $\tvk(5):=(0,1)$, $\tvk(6):=(1,-1)$ and $\tvk(7):=(0,-1)$.

\begin{table}[h]
\begin{center}
\begin{tabular}{{c|}*{3}{|c|}{|c}}
$\tvk(\kappa)$ & $\kappa$ & \vtop{\hbox{$\tnu_{1}(\tvk(\kappa))=$}\hbox{\hspace{0.6cm}$2
\ip{\tvk(\kappa)}{(1,27)/2}$}} & \vtop{\hbox{$\tnu_{2}(\tvk(\kappa))=$}\hbox{\hspace{0.6cm}$4\ip{\tvk(\kappa)}{(1,27)/4}$}} & \vtop{\hbox{$\tnu_{3}(\tvk(\kappa))=$}\hbox{\hspace{0.6cm}$8\ip{\tvk(\kappa)}{(1,27)/8}$}} \\
\hline
$(0,0)$ & 0 & 0 & 0 & 0   \\
$(-1,-1)$ & 4 & 0 & 0 & 4   \\
$(-1,1)$ & 2 & 0 & 2 & 2   \\
$(1,-1)$ & 6 & 0 & 2 & 6   \\
$(-1,0)$ & 1 & 1 & 3 & 7   \\
$(1,0)$ & 3 & 1 & 1 & 1   \\
$(0,-1)$ & 7 & 1 & 1 & 5   \\
$(0,1)$ & 5 & 1 & 3 & 3   \\
$(1,1)$ & $\cdots$ & 0 & 0 & 4   \\
\hline
\end{tabular}
\caption{Values $\tnu_{1}$, $\tnu_{2}$ and $\tnu_{3}$ for some wavenumbers and a possible assignment of $\tvk(\kappa)$. The reader should notice that $\tnu_{m+1}(\tvk(\kappa))-\tnu_{m}(\tvk(\kappa))$ is either $0$ or $2^{m}$.}\label{tablemap}
\end{center}
\end{table}
\vspace{-.5cm}
\begin{lemma} \label{tvklemma}
The map in Definition  \ref{wavenummapdef} has the property that for $m \in \N_0$ and $\kappa \in \F_{b^m}$,
\[
\{\tvk(\kappa + \lambda b^m)\}_{\lambda=0}^{\infty} =\{\bsl \in \Z^d : \tvk(\kappa) - \bsl \in \cp_m^{\perp} \}.
\]
\end{lemma}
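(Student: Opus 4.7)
The plan is to establish the equality by proving both inclusions separately. Using the linearity of $\tnu_m$ in its first argument (via \eqref{bilinearlinkprop}) together with the definition \eqref{dualdef}, the right hand side equals $\cp_m^{\perp,j}$ where $j=\tnu_m(\tvk(\kappa))$. So the forward inclusion reduces to showing $\tnu_m(\tvk(\kappa+\lambda b^m))=\tnu_m(\tvk(\kappa))$ for every $\lambda\ge 0$, and the reverse inclusion will follow from this together with injectivity of $\tnu_m\circ\tvk$ on $\F_{b^m}$ plus the surjectivity of $\tvk$ built into Definition \ref{wavenummapdef}.

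For the forward inclusion I would first prove a one-step fact: for every $M\ge 0$, every $\kappa^\star\in\F_{b^M}$, and every $\beta\in\F_b$, one has $\tnu_M(\tvk(\kappa^\star+\beta b^M))=\tnu_M(\tvk(\kappa^\star))$. Let $\hat a\in\F_b$ be the digit given by \eqref{tnuprop} so that $\tnu_{M+1}(\tvk(\kappa^\star))=\tnu_M(\tvk(\kappa^\star))+\hat a b^M$. Case (i) of Definition \ref{wavenummapdef} places $\tvk(\kappa^\star+\hat a b^M)$ into $\cp_{M+1}^{\perp,\tnu_M(\tvk(\kappa^\star))}$ when $\hat a\ne 0$, and case (ii) places $\tvk(\kappa^\star+\beta b^M)$ into $\cp_{M+1}^{\perp,\tnu_M(\tvk(\kappa^\star))+\beta b^M}$ for the remaining $\beta$; in both cases $\tnu_{M+1}(\tvk(\kappa^\star+\beta b^M))-\tnu_{M+1}(\tvk(\kappa^\star))$ is a multiple of $b^M$. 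Extracting the lowest $M$ digits via \eqref{nupropexp} then yields the desired one-step equality. Iterating this telescopically along the $b$-ary expansion of $\lambda$ gives $\tnu_m(\tvk(\kappa+\lambda b^m))=\tnu_m(\tvk(\kappa))$ for all $\kappa\in\F_{b^m}$ and $\lambda\in\N_0$.

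For the reverse inclusion I would prove by induction on $m$ that $\tnu_m\circ\tvk$ is injective on $\F_{b^m}$. The base case is trivial. For the inductive step, given $\kappa_1,\kappa_2\in\F_{b^{m+1}}$ with $\tnu_{m+1}(\tvk(\kappa_1))=\tnu_{m+1}(\tvk(\kappa_2))$, write $\kappa_i=\kappa'_i+\alpha_i b^m$ with $\kappa'_i\in\F_{b^m}$ and $\alpha_i\in\F_b$; extracting the lowest $m$ digits and invoking the forward-inclusion equality produces $\tnu_m(\tvk(\kappa'_1))=\tnu_m(\tvk(\kappa'_2))$, whence $\kappa'_1=\kappa'_2$ by the inductive hypothesis. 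The high digit $\alpha_1=\alpha_2$ then follows because the one-step analysis above shows that $a\mapsto\tnu_{m+1}(\tvk(\kappa'_1+a b^m))$ is a bijection from $\F_b$ onto $\{\tnu_m(\tvk(\kappa'_1))+a'b^m:a'\in\F_b\}$---the underlying permutation on $\F_b$ is either the identity (when $\hat a=0$) or the transposition swapping $0$ and $\hat a$. With injectivity in hand, for any $\bsl$ on the right hand side pick $\kappa^\star\in\N_0$ with $\tvk(\kappa^\star)=\bsl$; the forward inclusion gives $\tnu_m(\tvk(\kappa^\star\bmod b^m))=\tnu_m(\bsl)=\tnu_m(\tvk(\kappa))$, injectivity forces $\kappa^\star\bmod b^m=\kappa$, and hence $\bsl=\tvk(\kappa+\lambda b^m)$ for $\lambda=(\kappa^\star-\kappa)/b^m\ge 0$.

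The main obstacle I expect is the digit-level bookkeeping around \eqref{nupropexp} and \eqref{tnuprop}: carefully verifying in cases (i) and (ii) of Definition \ref{wavenummapdef} that perturbations at digit position $M$ of the index translate into perturbations at digit position $M$ of $\tnu_{M+1}$ of the image, without disturbing the lower $M$ digits, and then iterating cleanly over the $b$-ary expansion of $\lambda$. Once the one-step fact is established, the rest is a telescoping iteration coupled with an induction on $m$ that uses the surjectivity of $\tvk$.
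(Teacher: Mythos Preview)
Your proposal is correct and follows essentially the same route as the paper: both rewrite the right-hand side as the level set $\{\bsl:\tnu_m(\bsl)=\tnu_m(\tvk(\kappa))\}$ via \eqref{dualdef}--\eqref{kdotzbm}, establish the forward inclusion by showing that $\tnu_m\circ\tvk$ is invariant under adding multiples of $b^m$ to its argument, and obtain the reverse inclusion from surjectivity of $\tvk$ together with injectivity of $\tnu_m\circ\tvk$ on $\F_{b^m}$. The only notable difference is that the paper treats this injectivity as immediate (it follows by counting, since the forward inclusion forces the $b^m$ disjoint fibers $\{\tvk(\kappa+\lambda b^m)\}_{\lambda\ge 0}$ into the $b^m$ nonempty dual cosets), whereas you supply an explicit digit-by-digit induction; your version is more detailed but not a genuinely different argument.
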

\begin{proof} This statement holds trivially for $m=0$ and $\kappa=0$.  For $m \in \N$ it is noted that
\begin{align}
\nonumber
\bsk-\bsl \in \cp_m^{\perp} & \iff \ip{\bsk-\bsl}{\bsz_{b^{m-1}}}=0 \qquad \text{by \eqref{dualdef}} \\
\nonumber
& \iff \ip{\bsk}{\bsz_{b^{m-1}}}=\ip{\bsl}{\bsz_{b^{m-1}}} \qquad \text{by \eqref{bilinearlinkprop}}\\
\nonumber
& \iff b^{-m}\tnu_m(\bsk) =b^{-m}\tnu_m(\bsl) \qquad \text{by \eqref{kdotzbm}} \\
& \iff \tnu_m(\bsk) =\tnu_m(\bsl). \label{kminlinPperp}
\end{align}
This implies that for all $m \in \N$ and $\kappa \in \F_{b^m}$,
\begin{equation}
\{\bsl \in \Z^d : \tnu_m(\bsl) =  \tnu_m(\tvk(\kappa))\} = \{\bsl \in \Z^d : \tvk(\kappa) - \bsl \in \cp_m^{\perp} \}.
\end{equation}

By Definition \ref{wavenummapdef} it follows that for $m \in \N$ and $\kappa \in \F_{b^m}$,
\begin{align}
\nonumber
\{\tvk(\kappa +\lambda b^m)\}_{\lambda=0}^{b-1} 
& \subseteq \{\bsk \in  \Z^d : \tnu_{m+1}(\bsk)=\tnu_m(\tvk(\kappa))+a b^m, \ a \in \F_b \} \\
\nonumber
& = \{\bsk \in  \Z^d : \tnu_{m}(\bsk)=\tnu_m(\tvk(\kappa)) \}. 
\intertext{Applying property \eqref{nupropexp} on the right side,}
\nonumber
\{\tvk(\kappa +\lambda b^m)\}_{\lambda=0}^{b-1}& \subseteq \{\bsk \in  \Z^d : \tnu_{\ell}(\bsk)=\tnu_\ell(\tvk(\okappa_\ell)) \}, \qquad \forall \ell=1, \ldots, m.
\intertext{Because one can say the above equation holds $\forall\ell=1,\dots,n<m$, the left hand side can be extended,} 
\{\tvk(\kappa +\lambda b^m)\}_{\lambda=0}^{\infty}& \subseteq \{\bsk \in  \Z^d : \tnu_{m}(\bsk)=\tnu_m(\tvk(\kappa)) \}. \label{oneway}
\end{align}

Now suppose that $\bsl$ is any element of $\{\bsk \in  \Z^d : \tnu_{m}(\bsk)=\tnu_m(\tvk(\kappa)) \}$.  Since the map $\tvk$ is onto, there exists some $\kappa' \in \N_0$ such that $\bsl=\tvk(\kappa')$. Choose $\lambda'$ such that $\kappa'=\overline{\kappa'}_m +\lambda' b^m$, where the overbar notation was defined in \eqref{onudef}.  According to \eqref{oneway} it follows that $\tnu_m(\tvk(\overline{\kappa'}_m))=\tnu_m(\tvk(\overline{\kappa'}_m +\lambda' b^m)) = \tnu_m(\bsl)=\tnu_m(\tvk(\kappa))$.  Since $\overline{\kappa'}_m$ and $\kappa$ are both in $\F_{b^m}$, this implies that $\overline{\kappa'}_m=\kappa$, and so $\bsl \in \{\tvk(\kappa +\lambda b^m)\}_{\lambda=0}^{\infty}$.  Thus, $\{\tvk(\kappa +\lambda b^m)\}_{\lambda=0}^{\infty} \supseteq \{\bsk \in  \Z^d : \tnu_{m}(\bsk)=\tnu_m(\tvk(\kappa)) \}$, and the lemma is proved. \qed
\end{proof}

For convenience we adopt the notation $\hf_{\kappa} :=\hf(\tvk(\kappa))$ and $\tf_{m,\kappa} := \tf_m(\tvk(\kappa))$. Then, by Lemma  \ref{tvklemma} the error bound in \eqref{err1} may be written as
\begin{equation}
\biggabs{ \int_{\cube} f(\bsx) \, \D \bsx - \hI_m(f)} 
\le \sum_{\lambda=1}^{\infty} \left \lvert \hf_{\lambda b^m}\right \rvert, \label{err2}
\end{equation}
and the aliasing relationship in \eqref{tfassum} becomes
\begin{align}
\tf_{m,\kappa} =\hf_{\kappa} + \sum_{\lambda=1}^{\infty} \hf_{\kappa+\lambda b^{m}} \E^{2 \pi \sqrt{-1} \ip{\tvk(\kappa+\lambda b^{m}) - \tvk(\kappa)}{\bsDelta}}. \label{tfassumc}
\end{align}

%\subsection{Cone Condition: Decay of the Fourier Coefficients}\label{sumscoeff}
Given an integrand with absolutely summable Fourier coefficients, consider the following sums defined for $\ell,m \in \N_0$, $\ell \le m$:
\begin{gather*}
S_m(f) =  \sum_{\kappa=\left \lfloor b^{m-1} \right \rfloor}^{b^{m}-1} \bigabs{\hf_{\kappa}}, \qquad 
\hS_{\ell,m}(f)  = \sum_{\kappa=\left \lfloor b^{\ell-1} \right \rfloor}^{b^{\ell}-1} \sum_{\lambda=1}^{\infty} \bigabs{ \hf_{\kappa+\lambda b^{m}}}, \\
\wcS_m(f)=\hS_{0,m}(f) + \cdots + \hS_{m,m}(f)=
\sum_{\kappa=b^{m}}^{\infty} \bigabs{\hf_{\kappa}}, \qquad
\tS_{\ell,m}(f) = \sum_{\kappa=\left \lfloor b^{\ell-1}\right \rfloor}^{b^{\ell}-1} \bigabs{\tf_{m,\kappa}}.
\end{gather*}
Note that $\tS_{\ell,m}(f)$ is the only one that can be observed from data because it involves the discrete transform coefficients. In fact, from \eqref{tfmYdef} one can identify $\bigabs{\tf_{m,\kappa}} = \bigabs{Y_m(\tnu_m(\tvk(\kappa)))}$ and our adaptive algorithm will be based on this sum bounding the other three, $S_m(f),\hS_{\ell,m}(f)$, and $\wcS_m(f)$, which cannot be readily observed. 

Let  $\ell_* \in \N$ be some fixed integer and $\homega$ and $\wcomega$ be some bounded non-negative valued functions.  We define a \emph{cone}, $\cc$, of absolutely continuous functions whose Fourier coefficients decay according to certain inequalities:
\begin{multline} \label{conecond}
\cc:=\{f \in AC(\cube) : \hS_{\ell,m}(f) \le \homega(m-\ell) \wcS_m(f),\ \ \ell \le m, \\
\wcS_m(f) \le \wcomega(m-\ell) S_{\ell}(f),\ \  \ell_* \le \ell \le m\}.
\end{multline}
We also require the existence of $r$ such that $\homega(r) \wcomega(r)<1$ and that $\lim_{m \to \infty} \wcomega(m) = 0$. This set is a cone, i.e. $f \in \cc \implies af \in \cc\;\forall a\in\R$, but it is not convex. A wider discussion on the advantages and disadvantages of designing numerical algorithms for cones of functions can be found in \cite{Clancy201421}.

Functions in $\cc$ have Fourier coefficients that do not oscillate wildly. According to \eqref{err2}, the error of our integration is bounded by $\hS_{0,m}(f)$. Nevertheless, for practical purposes we will use $S_{\ell}(f)$ as an indicator for the error. Intuitively, the cone conditions enforce these two sums to follow a similar trend. Thus, one can expect that small values of $S_{\ell}(f)$ imply small values of $\hS_{0,m}(f)$.

The first inequality controls how an infinite sum of \emph{some} of the larger wavenumber coefficients are bounded above by a sum of all the surrounding coefficients. The second inequality controls how the sum of these surrounding coefficients is bounded above by a \emph{finite} sum of some smaller wavenumber Fourier coefficients. In Figure \ref{Walshcoeffig} we can see how $S_8(f)$ can be used to bound $\wcS_{12}(f)$ and $\wcS_{12}(f)$ to bound $\hS_{0,12}(f)$. The former sum also corresponds to the error bound in \eqref{err2}.

For small $\ell$ the sum $S_\ell(f)$ includes only a few summands. Therefore, it could accidentally happen that $S_\ell(f)$ is too small compared to $\wcS_m(f)$. To avoid this possibility, the cone definition includes the constraint that $\ell$ is greater than some minimum $\ell_*$.

\begin{figure}
\centering
\includegraphics[width=8.0cm]{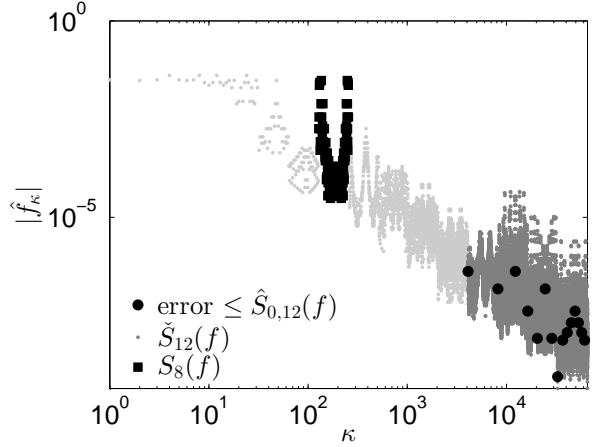}
\caption{The magnitudes of true Fourier coefficients for some integrand. \label{Walshcoeffig}}
\end{figure}

Because we do not assume the knowledge of the true  Fourier  coefficients, for functions in $\cc$ we need bounds on $S_{\ell}(f)$ in terms of the sum of the discrete coefficients $\tS_{\ell,m}(f)$.  This is done by applying \eqref{tfassumc},  and the definition of the cone in \eqref{conecond}:
\begin{align}
\nonumber
S_\ell(f) &= \sum_{\kappa=\left \lfloor b^{\ell-1} \right \rfloor}^{b^{\ell}-1} \bigl \lvert \hf_{\kappa}\bigr\rvert= \sum_{\kappa=\left \lfloor b^{\ell-1} \right \rfloor}^{b^{\ell}-1} \abs{\tf_{m,\kappa} - \sum_{\lambda=1}^{\infty} \hf_{\kappa+\lambda b^{m}} \E^{2 \pi \sqrt{-1} \ip{\tvk(\kappa+\lambda b^{m}) - \tvk(\kappa)}{\bsDelta}}}\\
\nonumber
&\le \sum_{\kappa=\left \lfloor b^{\ell-1} \right \rfloor}^{b^{\ell}-1} \bigl \lvert \tf_{m,\kappa} \bigr\rvert + \sum_{\kappa=\left \lfloor b^{\ell-1} \right \rfloor}^{b^{\ell}-1} \sum_{\lambda=1}^{\infty} \bigl \lvert \hf_{\kappa+\lambda b^{m}}\bigr\rvert = \tS_{\ell,m}(f) + \hS_{\ell,m}(f) \\
\label{boundSumbsApproxa}
&\le \tS_{\ell,m}(f) + \homega(m-\ell) \wcomega(m-\ell) S_\ell(f)\
\end{align}
and provided that $\homega(m-\ell) \wcomega(m-\ell)<1$,
\begin{equation}\label{boundSumsApproxb}
S_\ell(f) \le \frac{\tS_{\ell,m}(f)}{1 - \homega(m-\ell) \wcomega(m-\ell)}.
\end{equation}
By \eqref{err2} and the cone conditions, \eqref{boundSumsApproxb} implies a data-based error bound:
\begin{align}
\nonumber
\biggabs{\int_{\cube} f(\bsx) \, \D \bsx - \hI_m(f) }
&\le \sum_{\lambda=1}^{\infty} \bigabs{\hf_{\lambda b^{m}}} 
= \hS_{0,m}(f)\le \homega(m) \wcS_m(f)\\
\nonumber
&  \le \homega(m) \wcomega(m-\ell) S_\ell(f)\\
& \le  \frac{\homega(m) \wcomega(m-\ell)}{1 - \homega(m-\ell) \wcomega(m-\ell)}\tS_{\ell,m}(f).
\label{SSbd2}
\end{align}
In Section \ref{algorithmsection} we construct an adaptive algorithm based on this conservative bound. 

\section{An Adaptive Algorithm Based for Cones of Integrads}\label{algorithmsection}

Inequality \eqref{SSbd2} suggests the following algorithm. First, choose $\ell_*$ and fix $r:=m-\ell \in \N$ such that $\homega(r)\wcomega(r)<1$ for $\ell\geq\ell_*$. Then, define
\[
\fC(m):= \frac{\homega(m) \wcomega(r)}{1 - \homega(r) \wcomega(r)}.
\]

The choice of the parameter $r$ is important. Larger $r$ means a smaller $\fC(m)$, but it also makes the error bound more dependent on smaller indexed Fourier coefficients.

\begin{algo}[Adaptive Rank-1 Lattice Cubature, \cublat] \label{adapalgo} Fix $r$ and $\ell_*$, $\homega$ and $\wcomega$ describing $\cc$ in \eqref{conecond}. Given a tolerance, $\varepsilon$, initialize $m=\ell_*+r$ and do:

\begin{description}
\item[\textbf{Step 1.}] According to Section \ref{FFT}, compute $\tS_{m-r,m}(f)$.
\item[\textbf{Step 2.}] Check whether $\fC(m)  \tS_{m-r,m}(f) \le \varepsilon$. If true, return $\hI_m(f)$ defined in \eqref{cubaturedef}. If not, increment $m$ by one, and go to Step 1.
\end{description}
\end{algo}

\begin{theorem} \label{adapalgothm} For $m = \min \{m' \ge \ell_*+r : \fC(m')  \tS_{m'-r,m'}(f) \le \varepsilon \}$, Algorithm \ref{adapalgo} is successful whenever $f\in\cc$,
\[
\biggabs{\int_{\cube} f(\bsx) \D \bsx - \hI_m(f)} \le \varepsilon.
\]
Thus, the number of function data needed is $b^m$. Defining $m^* = \min \{m' \ge \ell_*+r : \fC(m') [1+ \homega(r) \wcomega(r)] S_{m'-r}(f) \le \varepsilon \}$, we also have $b^m\leq b^{m^*}$. This means that the computational cost can be bounded,
\[
\mathrm{cost}\left(\widehat{I}_m,f,\varepsilon\right)\leq \$(f)b^{m^*}+cm^*b^{m^*}
\]
where $\$(f)$ is the cost of evaluating $f$ at one data point.
\end{theorem}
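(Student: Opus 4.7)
My plan is to verify the three conclusions of Theorem \ref{adapalgothm} in order, the hinge being an upper bound on the data-observable sum $\tS_{\ell,m}(f)$ in terms of the theoretical sum $S_\ell(f)$ that complements the already-established lower bound \eqref{boundSumsApproxb}.

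For the error assertion I would simply invoke \eqref{SSbd2} at the terminal index $m$. Because $m$ passes the Step 2 test and satisfies $\ell:=m-r\ge\ell_*$ with $\homega(r)\wcomega(r)<1$, the chain \eqref{SSbd2} applies with this $\ell$ and gives
\[
\Bigabs{\int_{\cube} f(\bsx)\,\D\bsx-\hI_m(f)}\le \fC(m)\,\tS_{m-r,m}(f)\le\varepsilon.
\]
The data count of $b^m$ is immediate from the definition \eqref{cubaturedef} of $\hI_m$, which samples only the nodes of $\cp_m$.

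The crux is the bound $b^m\le b^{m^*}$, which requires the reverse-direction estimate linking $\tS$ back to $S$. Starting from the aliasing identity \eqref{tfassumc}, the triangle inequality gives $\bigabs{\tf_{m,\kappa}}\le\bigabs{\hf_\kappa}+\sum_{\lambda=1}^\infty\bigabs{\hf_{\kappa+\lambda b^m}}$; summing over $\kappa\in\{\lfloor b^{\ell-1}\rfloor,\ldots,b^\ell-1\}$ and then chaining the two cone inequalities in \eqref{conecond} yields, for $\ell_*\le\ell\le m$,
\[
\tS_{\ell,m}(f)\le S_\ell(f)+\hS_{\ell,m}(f)\le\bigl[1+\homega(m-\ell)\wcomega(m-\ell)\bigr]S_\ell(f).
\]
Setting $\ell=m'-r$ and multiplying by $\fC(m')$, every $m'\ge\ell_*+r$ that satisfies the $m^*$-criterion $\fC(m')[1+\homega(r)\wcomega(r)]S_{m'-r}(f)\le\varepsilon$ also satisfies the Step 2 test $\fC(m')\tS_{m'-r,m'}(f)\le\varepsilon$; the minimality of $m$ then forces $m\le m^*$.

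The cost estimate is bookkeeping: the $b^m$ function evaluations contribute $\$(f)b^m\le\$(f)b^{m^*}$, while the FFT described in Section \ref{FFT} costs $\Order(jb^j)$ at each level $j$ from $\ell_*+r$ to $m$, and this geometric sum is $\Order(mb^m)\le\Order(m^*b^{m^*})$. I expect the main obstacle in writing this out cleanly to be merely the index juggling required when invoking \eqref{SSbd2} and the cone inequalities with the correct $\ell$; the substantive new content is the one-line reverse estimate above, which is what closes the loop between the computable quantity $\tS_{m-r,m}(f)$ and the oracle quantity $S_{m-r}(f)$ appearing in the definition of $m^*$.
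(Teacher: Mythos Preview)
Your proposal is correct and follows essentially the same route as the paper: invoke \eqref{SSbd2} for the success claim, then derive the reverse estimate $\tS_{\ell,m}(f)\le[1+\homega(m-\ell)\wcomega(m-\ell)]S_\ell(f)$ from the aliasing identity \eqref{tfassumc} plus the two cone inequalities, and use it to conclude $m\le m^*$. The paper's proof is almost verbatim what you outline; your treatment of the cost bookkeeping (summing the per-level FFT work geometrically) is slightly more explicit than the paper's, but not different in substance.
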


\begin{proof}
By construction, the algorithm must be successful. Recall that the inequality used for building the algorithm is \eqref{SSbd2}.

To find the upper bound on the computational cost, a similar result to \eqref{boundSumbsApproxa} provides
\begin{align}
\nonumber
\tS_{\ell,m}(f) &= \sum_{\kappa=b^{\ell-1}}^{b^{\ell}-1} \bigabs{ \tf_{m,\kappa}} = \sum_{\kappa=b^{\ell-1}}^{b^{\ell}-1} \biggabs{\hf_{\kappa} + \sum_{\lambda=1}^{\infty} \hf_{\kappa+\lambda b^{m}} \E^{2 \pi \sqrt{-1} \ip{\tvk(\kappa+\lambda b^{m}) - \tvk(\kappa)}{\bsDelta}}}\\
\nonumber
&\le \sum_{\kappa=b^{\ell-1}}^{b^{\ell}-1} \bigabs{\hf_{\kappa}} + \sum_{\kappa=b^{\ell-1}}^{b^{\ell}-1} \sum_{\lambda=1}^{\infty} \bigabs{\hf_{\kappa+\lambda b^{m}}} 
= S_{\ell}(f) + \hS_{\ell,m}(f) \\
\nonumber
&\le [1  + \homega(m-\ell) \wcomega(m-\ell)] S_\ell(f). \label{SSbd3}
\end{align}
Replacing $\tS_{\ell,m}(f)$ in the error bound in \eqref{SSbd2} by the right hand side above proves that the choice of $m$ needed to satisfy the tolerance is no greater than  $m^*$ defined above.

In Section \ref{FFT}, the computation of $\tS_{m-r,m}(f)$ is described in terms of $\Order(mb^m)$ operations. Thus, the total cost of Algorithm \ref{adapalgo} is,
\[
\mathrm{cost}\left(\widehat{I}_m,f,\varepsilon\right)\leq \$(f)b^{m^*}+cm^*b^{m^*}
\]
\hfill \qed
\end{proof}

\section{Numerical Example} \label{secnumexpsec}

Algorithm \ref{adapalgo} has been coded in MATLAB as \cublat in base 2, and is part of GAIL, \cite{ChoEtal15a}. To test it, we priced an Asian call with geometric Brownian motion, $S_0=K=100$, $T=1$ and $r=3\%$. The test is performed on 500 samples whose dimensions are chosen IID uniformly among $1, 2, 4, 8, 16, 32,$ and $64$, and the volatility also IID uniformly from $10\%$ to $70\%$. Results, in Figure \ref{geoAsianmean}, show $97\%$ of success meeting the error tolerance.

The algorithm cone parametrization was $\ell_*=6$, $r=4$ and $\fC(m)=5 \times 2^{-m}$. In addition, each replication used a shifted lattice with $\bsDelta\sim U(0,1)$. However, results are strongly dependent on the generating vector that was used for creating the rank-1 lattice embedded node sets. The vector applied to this example was found with the \texttt{latbuilder} software from Pierre L'Ecuyer and David Munger \cite{LEcuyer:2015:AGS}, obtained for $2^{26}$ points, $d=250$ and coordinate weights $\gamma_j=j^{-2}$, optimizing the $P_2$ criterion.

For this particular example, the choice of $\fC(m)$ does not have a noticeable impact on the success rate or execution time. In other cases such as discontinuous functions, it is more sensitive. Being an adaptive algorithm, if the Fourier coefficients decrease quickly, cone conditions have a weaker effect. One can see that the number of summands involving $\tS_{m-r,m}(f)$ is $2^{m-r-1}$ for a fixed $r$. Thus, in order to give a uniform weight to each wavenumber, we chose $\fC(m)$ proportional to $2^{-m}$.

\begin{figure}[h!]
\centering
\includegraphics[width=7.5cm]{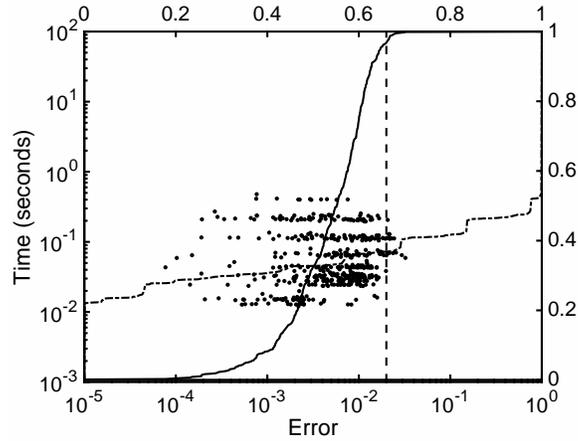} 
\caption{Empirical distribution functions obtained from 500 samples, for the error (continuous line) and time (slashed-doted line). Quantiles are specified on the right and top axes respectively. The tolerance of 0.02 (vertical dashed line) is an input of the algorithm and will be a guaranteed bound on the error if the function lies inside the cone. \label{geoAsianmean}}
\end{figure}

\section{Discussion and Future Work}
Quasi-Monte Carlo methods rarely provide guaranteed adaptive algorithms. This new methodology that bounds the absolute error via the discrete Fourier coefficients allows us to build an adaptive automatic algorithm guaranteed for cones of integrands. The non-convexity of the cone allows our adaptive, nonlinear algorithm to be advantageous in comparison with non-adaptive, linear algorithms.  

Unfortunately, the definition of the cone does contain parameters, $\homega$ and $\wcomega$, whose optimal values may be hard to determine.  Moreover, the definition of the cone does not yet correspond to traditional sets of integrands, such as  Korobov spaces.  These topics deserve further research.

Concerning the generating vector used in Section \ref{secnumexpsec}, some further research should be carried out to specify the connection between dimension weights and cone parameters. This might lead to the existence of optimal weights and generating vector.

Our algorithm provides an upper bound on the complexity of the problem, but we have not yet obtained a lower bound. We are also interested in extending our algorithm to accommodate a relative error tolerance.   We would like to understand how the cone parameters might depend on the dimension of the problem, and we would like to extend our adaptive algorithm to infinite dimensional problems via multi-level or multivariate decomposition methods.

\begin{acknowledgement}
The authors thank Ronald Cools and Dirk Nuyens for organizing MCQMC 2014 and greatly appreciate the suggestions made by Sou-Cheng Choi, Frances Kuo, Lan Jiang, Dirk Nuyens and Yizhi Zhang to improve this manuscript. In addition, the first author also thanks Art B. Owen for partially funding traveling expenses to MCQMC 2014 through the US National Science Foundation (NSF).
This work was partially supported by NSF grants DMS-1115392,  DMS-1357690, and DMS-1522687. 
\end{acknowledgement}

%\bibliographystyle{spmpsci.bst}
%\bibliography{FJH23,FJHown23,lluisantoni}

\end{document}